\newtheorem{theorem}{Theorem}[section]
\newtheorem*{theorem*}{Theorem A}
\newtheorem{lemma}[theorem]{Lemma}
\newtheorem*{definition*}{Definition}
\newtheorem*{remark*}{\text{Re}mark}
\newtheorem*{observation*}{Observation}
\newtheorem*{assumption*}{Assumption}
\theoremstyle{definition}
\theoremstyle{remark}
\newtheorem{remark}{\text{Re}mark}[section]
\newcommand{\D}{\mathbb{D}}
\newcommand{\C}{\mathbb{C}}
\newcommand{\T}{\mathbb{T}}
\begin{document}

\title{Hypercontractive inequalities and Nikol'ski\u{\i}-type inequalities on weighted Bergman spaces}

\author
{Zipeng Wang}
\address{Zipeng Wang\\
	College of Mathematics and Statistics, Chongqing University\\
	Chongqing, 401331, China}
\email{zipengwang2012@gmail.com}

\author
{Kenan Zhang}
\address
{Kenan Zhang\\
	School of Mathematical Sciences, Fudan University\\
	Shanghai, 200433, China}
\email{knzhang21@m.fudan.edu.cn}

\thanks{}

\begin{abstract}
In this note, we obtianed hypercontractive inequalities between different weighted Bergman spaces. In addition, we establish Nikol'ski\u{\i}-type inequalities for weighted Bergman spaces with optimal constants.
\end{abstract}

\subjclass[2020]{32A36, 30H20, 46E15}
\keywords{Poisson transform, hypercontractive inequality, Bergman spaces, Nikol'ski\u{\i}-type inequalities.}

\maketitle

\setcounter{equation}{0}

\section{Introduction}
Let $\T$ be the unit circle and we  identify the circle $\T$ with the interval $[0,2\pi)$. 
For $1\leqslant p<\infty$, $L^p(\T)$ is the Banach space of $L^p$-integrable function on the unit circle. Recall the Fourier coefficients $\hat{f}(n),n\in\mathbb{Z}$ of $f\in L^p(\T)$ is defined by
\[
\hat{f}(n)=\int_0^{2\pi}f(e^{i\theta})e^{-in\theta}d\theta,
\]
where $d\theta$ is the normalized Lebesgue measure on $[0,2\pi)$.
Then the Hardy space $H^p(\T)$ is the closed subspece of $L^p(\T)$: 
\[
	H^p(\T)=\left\{f\in L^p(\T)\left|\hat{f}(n)=0, \text{for each neagtive integer } n\right.\right\}.
\]
Let $f\in L^2(\T)$, then $f$ has a Fourier expansion
\[
	f(e^{i\theta})=\sum\limits_{n=-\infty}^{+\infty} \hat{f}(n) e^{in\theta}.
\]
For $0\leqslant r<1$, define the dialation operator
\[
	(T_r f)(e^{i\theta}):=\sum\limits_{n=-\infty}^{+\infty} \hat{f}(n) r^{|n|} e^{in\theta}.
\]
Observe that $\{T_r\}_{0\leqslant r<1}$ is a contractive operator semigroup on $L^p(\T)$.
The hypercontractive inequalities for $T_r$ is to determine the optimal range of $r$ such that $$T_r:L^p(\T)\to L^q(\T)$$ is a contraction. 
In his celebrated work, Weissler \cite{Weissler1980} established the following sharp criterion:
\[
	\|T_rf\|_{L^q(\T)}\leqslant \|f\|_{L^p(\T)}
\]
if and only if $r^2\leqslant (p-1)/(q-1)$.
Moreover, 
\[
	\|T_rf\|_{H^q(\T)}\leqslant \|f\|_{H^p(\T)}
\]
if and only if $r^2\leqslant p/q$. 

Let $\D$ be the unit disk and $dA=dxdy/\pi$ be the normalized area measure on $\mathbb{D}$.
Suppose that $0<p<\infty$ and $ 1<\alpha<\infty,$ the standard weighted Bergman space $A_\alpha^p(\mathbb{D})$ consists of analytic functions on the unit disk such that
\[
	||f||_{A_\alpha^p(\mathbb{D})}:=\left( \int_\mathbb{D} |f(z)|^p dA_\alpha(z) \right)^\frac{1}{p}<\infty,
\]
where 
$dA_\alpha(z)=(\alpha-1)(1-|z|^2)^{\alpha-2}dA(z).$
Recall each analytic function $f$ on $\D$ has the following Taylor expansion at $0$:
\[
	f(z)=\sum\limits_{n=0}^{+\infty} a_n z^n,\quad \forall z\in\D.
\]
Then the dilation operator can be formally written as
\[
	T_r f(z):=\sum\limits_{n=0}^{+\infty} a_n r^n z^n=f(rz),\quad\forall z\in\mathbb{D}.
\]
In a recent work Melentijević \cite{Petar2023}, he obtained the optimal hypercontractive constant $r_0=\sqrt{p/q}$ such that   
\[
||T_rf ||_{A_\alpha^q(\mathbb{D})} \leqslant ||f||_{A_\alpha^p(\mathbb{D})},\quad	\forall 0<r\leqslant r_0
\]
for $0<p\leqslant q<\infty$ and $q\geqslant 2$.
Melentijević's work 
is motivated by Janson’s hypercontractive problem of dialtion operators on spaces of analytic functions. For background and recent developments on Janson's problem, one can consult \cite{Kulikov2022} and \cite{Petar2023}.

The first goal of current paper is to extend  Melentijević's work to the case of two distinct weighted Bergman spaces $A_\alpha^p(\mathbb{D})$ and $A_\beta^q(\mathbb{D}).$

\begin{theorem}\label{main01}
	For $1<\alpha,\beta<\infty$ and $0<p\leqslant q<\infty$, let  $q\geqslant 2$ and $\beta p\leqslant\alpha q$. Then  we have 
	\begin{equation}\label{hypercontractive}
		||T_rf ||_{A_\beta^q(\mathbb{D})} \leqslant ||f||_{A_\alpha^p(\mathbb{D})}
	\end{equation}
	for any $ f\in A_\alpha^p(\mathbb{D})$ if and only if $ r^2\leqslant\beta p/\alpha q$.
\end{theorem}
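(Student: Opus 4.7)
The plan has two components. For \textbf{necessity}, I would test on the family $f_\varepsilon(z)=1+\varepsilon z$ for small $\varepsilon>0$. Using the rotation invariance of $dA_\alpha,dA_\beta$ and the elementary identity $\int_\mathbb{D}|z|^2\,dA_\eta(z)=1/\eta$, the second-order Taylor expansion of $|f_\varepsilon|^p$ yields (linear terms in $\varepsilon$ vanishing upon integration)
\[
\|f_\varepsilon\|_{A_\alpha^p}^p=1+\tfrac{p^2}{4\alpha}\varepsilon^2+O(\varepsilon^3),\qquad\|T_rf_\varepsilon\|_{A_\beta^q}^q=1+\tfrac{q^2r^2}{4\beta}\varepsilon^2+O(\varepsilon^3).
\]
Extracting $p$-th and $q$-th roots and comparing the $\varepsilon^2$ coefficients in \eqref{hypercontractive} forces $qr^2/(4\beta)\leq p/(4\alpha)$, i.e.\ $r^2\leq\beta p/(\alpha q)$.

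For \textbf{sufficiency}, the algebraic identity $r_0^2=\beta p/(\alpha q)=(\beta/\alpha)(p/q)$ invites the factorization $T_{r_0}=T_{r_1}T_{r_2}$ with $r_1^2\leq\beta/\alpha$ and $r_2^2\leq p/q$, chaining
\[
A_\alpha^p(\mathbb{D})\xrightarrow{T_{r_1}}A_\beta^p(\mathbb{D})\xrightarrow{T_{r_2}}A_\beta^q(\mathbb{D}).
\]
The second arrow is exactly Melentijević's theorem \cite{Petar2023}, so the task would reduce to the weight-change contraction $\|T_{r_1}f\|_{A_\beta^p}\leq\|f\|_{A_\alpha^p}$ for $r_1^2\leq\beta/\alpha$. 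By polar decomposition this becomes an integral comparison of $M_p(u,f)^p$ against two explicit radial probability measures on $[0,1]$. When $\beta\geq\alpha$ one can take $r_1=1$, and the claim follows from stochastic dominance of the CDFs $F_\gamma(t)=1-(1-t^2)^{\gamma-1}$ together with the monotonicity of $u\mapsto M_p(u,f)^p$ (from subharmonicity of $|f|^p$). For $\beta<\alpha$, stochastic dominance fails; I would instead Taylor-expand $M_p(u,f)^p$ in powers of $u^2$ and reduce term-by-term to the elementary coefficient inequality $(\beta/\alpha)^n\leq\prod_{k=0}^{n-1}(\beta+k)/(\alpha+k)$, which holds because each factor $(\beta+k)/(\alpha+k)\geq\beta/\alpha$ when $\beta\leq\alpha$, $k\geq 0$.

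The \textbf{main obstacle} is two-fold. First, the term-by-term Taylor argument for the weight-change step is cleanest when $p=2$ (where $M_2(u,f)^2=\sum|a_n|^2u^{2n}$); extending it to general $0<p<\infty$ will require Hardy's log-convexity of $M_p(\cdot,f)^p$ in $\log u$ together with an integration-by-parts reduction. Second — and more seriously — in the regime $\beta>\alpha$ the factorization above can only use $r_1=1$ in the first step (since $\sqrt{\beta/\alpha}>1$ is inadmissible as a dilation on $\mathbb{D}$), so it recovers only the weaker bound $r\leq\sqrt{p/q}<\sqrt{\beta p/(\alpha q)}$. Restoring sharpness in this case likely demands a direct, non-factored proof that represents $f\in A_\alpha^p$ via a Poisson/Bergman integral against the boundary and applies the sharp Weissler estimate on $L^p(\mathbb{T})$ to the representing datum, along the lines suggested by the paper's title.
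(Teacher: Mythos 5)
Your necessity argument is the same as the paper's: test on $1+\varepsilon z$, expand to second order, and compare the $\varepsilon^2$ coefficients; this part is fine. The sufficiency part, however, has a genuine gap, and it is essentially the one you flag yourself. The factorization $A_\alpha^p\xrightarrow{T_{r_1}}A_\beta^p\xrightarrow{T_{r_2}}A_\beta^q$ places the weight-changing dilation at the exponent $p$, but the hypothesis only guarantees $q\geqslant 2$, not $p\geqslant 2$. Every known proof of such a sharp dilation estimate (including Melentijevi\'c's) runs through the convexity of $\Phi(y)=\int_0^{2\pi}|f(\sqrt{y}e^{i\theta})|^s\,d\theta$, which requires the exponent $s\geqslant 2$; your term-by-term expansion genuinely works only for $p=2$, and Hardy's log-convexity of $M_p$ is far too weak to recover the sharp constant for $0<p<2$. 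Worse, in the regime $\beta>\alpha$ the first factor would need $r_1=\sqrt{\beta/\alpha}>1$, which is not a dilation of the disk, so the factorization caps out at $r\leqslant\sqrt{p/q}$ and cannot reach the claimed sharp radius $\sqrt{\beta p/(\alpha q)}$; the Poisson-representation fix you sketch is not an argument.

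The paper circumvents both obstacles by reordering the two steps and choosing the intermediate space differently. Set $\beta'=\alpha q/p\geqslant\alpha$. The first step is Kulikov's contractive embedding $\|f\|_{A_{\beta'}^q(\mathbb{D})}\leqslant\|f\|_{A_\alpha^p(\mathbb{D})}$ (\cite{Kulikov2022}, valid for all $0<p\leqslant q$ with $\beta'/q=\alpha/p$), which handles the exponent change $p\to q$ with no dilation and no convexity hypothesis at all. What remains is a single dilation estimate entirely at the exponent $q\geqslant 2$, namely $\|T_rf\|_{A_\beta^q(\mathbb{D})}\leqslant\|f\|_{A_{\beta'}^q(\mathbb{D})}$ for $r^2=\beta/\beta'=\beta p/(\alpha q)$, which is admissible ($r\leqslant 1$) precisely under the hypothesis $\beta p\leqslant\alpha q$. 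This is proved by polar coordinates, two integrations by parts, $\Phi''\geqslant 0$ (Melentijevi\'c's lemma, available since $q\geqslant 2$), and the pointwise bound $1-y/r^2\leqslant(1-y)^{\beta'/\beta}$ on $[0,r^2]$ coming from convexity of $(1-y)^{\beta'/\beta}$. If you want to salvage your outline, the key missing idea is exactly this: do the exponent change first via Kulikov's embedding, so that the only dilation you ever need lives at the exponent $q\geqslant 2$.
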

\begin{remark}
It is known that $\lim\limits_{\alpha\to 1}\|f\|_{A_\alpha^p}=\|f\|_{H^p}$ for $f\in H^p$. 
Hence, Theorem \ref{main01} is a generalization of Janson's strong hypecontractive inequalities between Hardy spaces in \cite{Janson1983} for the case $0<p\leqslant q<\infty$ and $q\geqslant 2$. The proofs of Theorem \ref{main01} are inspried by some ideas and methods from \cite{Kulikov2022} and \cite{Petar2023}.
After our paper was uploaded to arXiv, Huang Yi and Zhang Jianyang informed us that they had also obtained similar results.
\end{remark}
Let $\mathbb{C}[z_1,\dots,z_n]$ be the space of complex polynomials on $\mathbb{C}^n$ and $\textbf{z}=(z_1,\dots,z_n)\in\C^n$
For a $m$-homogeneous polynomial $P(\textbf{z})\in\mathbb{C}[z_1,\dots,z_n]$, as a corollary of Weissler's hypecontractive work \cite{Weissler1980}, we have
$$||P||_{H^q(\mathbb{T}^n)}\leqslant\left(\sqrt{\frac{q}{p}}\right)^m ||P||_{H^p(\mathbb{T}^n)},$$
where
\[
	||P||_{H^p(\mathbb{T}^n)}=\left(\int_{\mathbb{T}^n}|P(\textbf{z})|^p dm(\textbf{z})\right)^\frac{1}{p}
\]
and $dm(\textbf{z})$ is the Haar measure on $\mathbb{T}^n$. 
For a complex polynomial $P$ in $\mathbb{C}[z_1,\dots,z_n]$,
we write 
\[
	P(\textbf{z})=\sum\limits_{\gamma\in\mathbb{N}^n}c_\gamma \textbf{z}^\gamma,
\]
where $\gamma=(\gamma_1,\dots,\gamma_n)\in\mathbb{N}^n$ is a multi-index and $\textbf{z}^\gamma=z_1^{\gamma_1}\cdots z_n^{\gamma_n}$. 
Let $|\gamma|=\sum\limits_{j=1}^n \gamma_j$. 
Then the degree of the polynomial $P$ is defined by
\[
	\text{deg}(P):=\max\{|\gamma|:c_\gamma\not=0\}.
\]
Let $m$ be the degree of the polynomial $P$. The famous Nikol'ski\u{\i}-type inequality \cite{Nikol1951,Nikol1975} states that for any complex polynomial $P$ in $\mathbb{C}[z_1,\dots,z_n]$ and $0<q<p<\infty$, 
\[
\|P\|_{H^q(T^n)}\leqslant C\|P\|_{H^p(\mathbb{T}^n)},
\]
where the constant $C=C(n,m,p,q)$.

Using hypercontractive inequalities of the Poisson transform between Hardy spaces \cite{Weissler1980}, Defant and Masty{\l}o \cite{Defant2016} 
obtained a dimension-free estimate of Nikol'ski\u{\i}-type inequality for Hardy spaces. In particular,  for $0<p<q<\infty$, the optimal constant in  Nikol'ski\u{\i}-type inequality for Hardy spaces is 
\[
	C(m,p,q)=\left(\sqrt{\frac{q}{p}}\right)^m.
\]

For $n\geqslant 1,0<p<\infty$ and $1<\alpha<\infty$, recall the weighted Bergman space $A_\alpha^p(\mathbb{D}^n)$ on the polydisc $\mathbb{D}^n$ is
\[
A_\alpha^p(\mathbb{D}^n)=\left\{f\in H(\mathbb{D}^n)\left| ||f||_{A_\alpha^p(\mathbb{D}^n)}^p:=\int_{\mathbb{D}^n}|f(z_1,\dots,z_n)|^pdA_\alpha(z_1)\cdots dA_\alpha(z_n)<\infty\right.\right\},
\]
where $H(\mathbb{D}^n)$ is the set of analytic functions on $\mathbb{D}^n$.
Inspried by ideas from \cite{Defant2016}, we have the following dimensional free Nikol'ski\u{\i}-type inequality for the weighted Bergman spaces with optimal constants.
\begin{theorem}\label{main02}
	For $1<\alpha,\beta<\infty$ and $0<p\leqslant q<\infty$, let $q\geqslant 2$ and $\beta p\leqslant\alpha q.$ Then for any positive integer $m,n$ and a complex polynomial $P\in\mathbb{C}[z_1,\dots,z_n]$ with degree $m$ 
	\begin{equation}\label{Nikol}
		||P||_{A_\beta^q(\mathbb{D}^n)}\leqslant\left(\sqrt{\frac{\alpha q}{\beta p}}\right)^m ||P||_{A_\alpha^p(\mathbb{D}^n)}.
	\end{equation}
	The constant $$C(\alpha,\beta,p,q)=\sqrt{\frac{\alpha q}{\beta p}}$$ is the best possible such that for each positive integers $n,m$ and $ P\in\mathbb{C}[z_1,\dots,z_n]$ with degree $m,$ the following inequality holds
	$$||P||_{A_\beta^q(\mathbb{D}^n)}\leqslant C(\alpha,\beta,p,q)^m ||P||_{A_\alpha^p(\mathbb{D}^n)}.$$
\end{theorem}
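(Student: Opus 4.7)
My approach combines the hypercontractive estimate of Theorem~\ref{main01} with a Bernstein-type lower bound for the dilation operator on Bergman polynomials. The first step is to tensorize Theorem~\ref{main01} from $\D$ to the polydisc $\D^n$: iterating the one-variable estimate coordinate by coordinate and invoking Minkowski's integral inequality (which exploits the hypothesis $p \leqslant q$) yields
\begin{equation*}
\|T_r f\|_{A_\beta^q(\D^n)} \leqslant \|f\|_{A_\alpha^p(\D^n)}, \qquad r^2 \leqslant \beta p/\alpha q,
\end{equation*}
for every $f \in A_\alpha^p(\D^n)$, where $T_r f(\mathbf{z}) = f(r\mathbf{z})$.

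The second step is a Bernstein-type inequality: for every polynomial $P$ of degree $m$ on $\C^n$ and $r \in (0,1]$,
\begin{equation*}
r^m \|P\|_{A_\beta^q(\D^n)} \leqslant \|T_r P\|_{A_\beta^q(\D^n)}.
\end{equation*}
For $q=2$ this is an immediate consequence of the orthogonality of monomials in $A_\beta^2$: writing $P=\sum_{|\gamma|\leqslant m} c_\gamma \mathbf{z}^\gamma$ gives $\|T_r P\|_{A_\beta^2}^2 = \sum |c_\gamma|^2 r^{2|\gamma|}\|\mathbf{z}^\gamma\|_{A_\beta^2}^2 \geqslant r^{2m}\|P\|_{A_\beta^2}^2$, since $r^{2|\gamma|}\geqslant r^{2m}$ for $|\gamma|\leqslant m$ and $r\leqslant 1$. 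For general $q \geqslant 2$, I would establish the bound by decomposing $dA_\beta$ on each factor as the product of the radial probability measure $d\mu_\beta$ on $[0,1]$ and the uniform angular measure $d\theta/(2\pi)$, applying the Hardy-space Bernstein-Nikol'ski\u{\i} inequality to the trigonometric polynomial $\theta \mapsto P(\rho_1 e^{i\theta_1},\ldots,\rho_n e^{i\theta_n})$ of multidegree $\leqslant m$, and integrating against the radial measures.

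Combining the two steps at the critical radius $r_0 := \sqrt{\beta p/\alpha q}$ yields
\begin{equation*}
r_0^m \|P\|_{A_\beta^q(\D^n)} \leqslant \|T_{r_0} P\|_{A_\beta^q(\D^n)} \leqslant \|P\|_{A_\alpha^p(\D^n)},
\end{equation*}
which rearranges to \eqref{Nikol}. For the sharpness of $C = \sqrt{\alpha q/\beta p}$, I consider the sequence $P_n(\mathbf{z}) = (z_1+\cdots+z_n)^m$ with $n\to\infty$: the coordinates are i.i.d.\ with $E_\alpha|z_j|^2 = 1/\alpha$ under $dA_\alpha$ and $E_\beta|z_j|^2 = 1/\beta$ under $dA_\beta$, and the central limit theorem together with Stirling's formula for the resulting complex-Gaussian moments shows
\begin{equation*}
\frac{\|P_n\|_{A_\beta^q(\D^n)}}{\|P_n\|_{A_\alpha^p(\D^n)}} \longrightarrow \left(\sqrt{\tfrac{\alpha q}{\beta p}}\right)^m \quad \text{as } n \to \infty,
\end{equation*}
matching the claimed optimal constant.

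The main obstacle is establishing the Bernstein-type inequality of step two with the sharp factor $r^m$ in the general $q \geqslant 2$ case. The $L^2$ reduction is elementary orthogonality, but upgrading to general $q$ for Bergman norms on the polydisc requires a careful dimension-free argument, presumably through the Hardy-space Bernstein-Nikol'ski\u{\i} inequality on the torus after the radial decomposition. This is the Bergman analog of the sharp torus-lift argument of Defant-Mastyło, and balancing it to recover precisely the constant $r_0^{-m}$ rather than any larger one is the key technical subtlety.
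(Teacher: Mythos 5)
Your proposal is correct, but it is organized differently from the paper's proof, and the difference is worth recording. The paper never formulates your Bernstein-type lower bound. Instead it homogenizes $P$ into an $m$-homogeneous $Q(\mathbf{z},w)$ in $n+1$ variables, introduces the mixed norm $A^p_\alpha(\D^n,\mathbf{z})\times H^p(\T,w)$ (which coincides with $\|P\|_{A^p_\alpha(\D^n)}$ by rotation invariance), exploits the exact identity $\mathbf{T}_{\mathbf{r}_0}Q=r_0^mQ$ for homogeneous $Q$, and then runs the hypercontractive estimate coordinatewise --- your Lemma-type tensorization in the Bergman variables plus Weissler's theorem in the extra circle variable --- with a case split $\alpha<\beta$ versus $\beta<\alpha$ that invokes Kulikov's inequality in the first case. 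You instead keep everything inside $A^q_\beta(\D^n)$ and isolate the inequality $\|T_rP\|_{A^q_\beta(\D^n)}\geqslant r^m\|P\|_{A^q_\beta(\D^n)}$ for arbitrary polynomials of degree $m$; combined with the tensorized Theorem~\ref{main01} at $r_0=\sqrt{\beta p/\alpha q}$ this gives \eqref{Nikol} in one stroke, with no mixed norms, no case analysis, and no appeal to Kulikov or Weissler beyond what Theorem~\ref{main01} already contains. The one step you leave as a sketch --- the lower bound for general $q\geqslant2$ --- is genuine and completable exactly as you indicate: for fixed radii $\rho$ the function $g_\rho(\mathbf{w})=P(\rho_1w_1,\dots,\rho_nw_n)$ is an analytic polynomial of total degree at most $m$ on $\T^n$, and the estimate $\|g_\rho(r\cdot)\|_{H^q(\T^n)}\geqslant r^m\|g_\rho\|_{H^q(\T^n)}$ follows by homogenizing $g_\rho$ with one auxiliary variable $v$, noting $|h(\mathbf{w},v)|=|g_\rho(\mathbf{w}\bar v)|$ on the torus and $h(r\mathbf{w},rv)=r^mh(\mathbf{w},v)$, and using that the integral means of the analytic function $v\mapsto h(r\mathbf{w},v)$ are nondecreasing in the radius; integrating the $q$-th power against the radial measures finishes it. So the homogenization device the paper uses globally reappears locally inside your key lemma --- the two proofs are cousins at heart --- but your architecture is more modular and arguably cleaner. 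Your sharpness argument (normalized coordinate sums, central limit theorem, Stirling) is the same as the paper's.
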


\vspace{0.2in}

\noindent \textbf{Acknowledgement.} 
We would like to express our gratitude to Professor Kunyu Guo for his insightful suggestions on the current manuscript.
The work was supported by NSFC of China (No. 12471116).

\section{The proof of Theorem \ref{main01}}
\subsection{The Necessity}
We start with a simple but useful lemma in \cite{Janson1983} to prove the necessity for Theorem \ref{main01}.
\begin{lemma}\label{nec01}
	For $1<\alpha<\infty, 0<p<\infty$ and a real number $\varepsilon,$ let $f(z)=1+\varepsilon z$. As the real number $\varepsilon$ approaches $0$, we have
	$$||f||_{A_\alpha^p(\mathbb{D})}=||1+\varepsilon z||_{A_\alpha^p(\mathbb{D})}=1+\frac{p}{4\alpha}\varepsilon^2+O(\varepsilon^3).$$
\end{lemma}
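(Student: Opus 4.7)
The plan is to reduce the lemma to a direct Taylor expansion of $|1+\varepsilon z|^p$ about $\varepsilon=0$, followed by integration in polar coordinates. The starting point is the elementary identity $|1+\varepsilon z|^2 = 1 + 2\varepsilon\,\text{Re}(z) + \varepsilon^2|z|^2$, which lets me write $|1+\varepsilon z|^p = (1 + 2\varepsilon\,\text{Re}(z) + \varepsilon^2|z|^2)^{p/2}$ and expand the outer power to second order in the inner expression. Collecting terms by order of $\varepsilon$ should yield
\[
|1+\varepsilon z|^p = 1 + p\varepsilon\,\text{Re}(z) + \tfrac{p}{2}\varepsilon^2|z|^2 + \tfrac{p(p-2)}{2}\varepsilon^2\,\text{Re}(z)^2 + O(\varepsilon^3),
\]
with a remainder that I expect to be uniform in $z\in\D$ as $\varepsilon\to 0$.

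Next I would integrate this expansion against $dA_\alpha$. Rotational invariance of the weight immediately kills the $\text{Re}(z)$ term and gives $\int_\D \text{Re}(z)^2\,dA_\alpha = \tfrac12\int_\D |z|^2\,dA_\alpha$, so everything boils down to the single moment $\int_\D |z|^2\,dA_\alpha(z)$. Passing to polar coordinates and substituting $u=r^2$ converts this into a standard Beta integral equal to $1/\alpha$. Assembling the pieces produces
\[
\|1+\varepsilon z\|_{A_\alpha^p(\D)}^p = 1 + \Bigl(\tfrac{p}{2\alpha}+\tfrac{p(p-2)}{4\alpha}\Bigr)\varepsilon^2 + O(\varepsilon^3) = 1 + \frac{p^2}{4\alpha}\,\varepsilon^2 + O(\varepsilon^3),
\]
after which expanding $t\mapsto t^{1/p}$ at $t=1$ recovers the claimed asymptotic $1 + \frac{p}{4\alpha}\varepsilon^2 + O(\varepsilon^3)$.

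The main subtlety, and essentially the only step that requires real care, is justifying that the $O(\varepsilon^3)$ error is genuinely integrable and does not depend on $z$ in a dangerous way; this matters most when $p<2$, where the function $x\mapsto x^{p/2}$ is only smooth away from the origin. Since $|1+\varepsilon z|\ge 1-\varepsilon$ for all $z\in\overline{\D}$ whenever $\varepsilon<1$, the argument of the outer power stays in a compact subset of $(0,\infty)$, so Taylor's theorem with explicit remainder provides a uniform bound of the form $C_p\varepsilon^3$. Because $dA_\alpha$ is a probability measure on $\D$, integrating this uniform error preserves the order $\varepsilon^3$, making the expansion above fully rigorous and completing the proof.
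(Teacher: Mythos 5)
Your proposal is correct and follows essentially the same route as the paper: Taylor-expand $|1+\varepsilon z|^p=(1+2\varepsilon\,\mathrm{Re}(z)+\varepsilon^2|z|^2)^{p/2}$ to second order, integrate against $dA_\alpha$ using rotational symmetry and the moment $\int_{\mathbb{D}}|z|^2\,dA_\alpha=1/\alpha$, and take the $p$-th root. Your extra remark justifying uniformity of the $O(\varepsilon^3)$ remainder via $|1+\varepsilon z|\geqslant 1-\varepsilon$ is a small point of added rigor (relevant when $p<2$) that the paper leaves implicit, but it does not change the argument.
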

\begin{proof}
	Fix $z\in\mathbb{C}$, then
	\begin{align*}
		|1+\varepsilon z|^p
		&=\left(1+2\varepsilon \text{Re}{z}+\varepsilon^2|z|^2\right)^{\frac{p}{2}}\\
		&=1+\frac{p}{2}\left(2\varepsilon \text{Re}{z}+\varepsilon^2|z|^2\right)+\frac{1}{2}\cdot\frac{p}{2}\left(\frac{p}{2}-1\right)\left(2\varepsilon \text{Re}{z}+\varepsilon^2|z|^2\right)^2+O\left(\varepsilon^3\right).
	\end{align*}
	Integrating on the unit disk with respect to the measure $dA_\alpha$, one gets 
	\begin{align*}
		&\left(\int_\mathbb{D}|1+\varepsilon z|^p dA_\alpha(z)\right)^\frac{1}{p}\\
		=&1+\frac{1}{2}\varepsilon^2\int_\mathbb{D}|z|^2 dA_\alpha(z)+\frac{p-2}{8}\int_\mathbb{D}\left(2\varepsilon \text{Re}{z}+\varepsilon^2|z|^2\right)^2dA_\alpha(z)+O(\varepsilon^3)\\
		=&1+\frac{1}{2}\varepsilon^2\left(||z||_{A_\alpha^2(\mathbb{D})}^2+(p-2)||\text{Re}z||_{A_\alpha^2(\mathbb{D})}^2\right)+O(\varepsilon^3)\\
		=&1+\frac{p}{4\alpha}\varepsilon^2+O(\varepsilon^3).
	\end{align*}
This completes the proof of Lemma \ref{nec01}.
\end{proof}

For any $f\in A_\alpha^p(\mathbb{D})$ and $r\in[0,1)$, recall that 
$$
T_rf(z)=f(rz),\quad z\in\mathbb{D}.
$$
Then, by Lemma \ref{nec01},
\[
	||1+\varepsilon rz||_{A_\beta^q(\mathbb{D})}=1+\frac{q}{4\beta}\varepsilon^2r^2+O(\varepsilon^3).
\]
Assume the hypercontractive inequality \eqref{hypercontractive} holds, let $f(z)=1+\varepsilon z$, we have
\[
	\|1+\varepsilon rz\|_{A_\beta^q(\mathbb{D})}\leqslant \|1+\varepsilon z\|_{A_\alpha^p(\mathbb{D})},
\]
Hence,
\[
	1+\frac{q}{4\beta}\varepsilon^2r^2\leqslant 1+\frac{p}{4\alpha}\varepsilon^2+O(\varepsilon^3).
\]
As $\varepsilon$ approaches $0$, it follows that 
\[
	r^2\leqslant\frac{\beta p}{\alpha q}.
\]
This achieves the necessary part of Theorem \ref{main01}. 
 
\subsection{The Sufficienty}
	For $0<p\leqslant q<\infty,$ let 
	\[
		\beta'=\frac{q}{p}\cdot\alpha,
	\] then 
	\[
		\frac{\beta'}{\alpha}=\frac{q}{p}\geqslant 1.
	\]
	 By Kulikov's inequality \cite[Corollary 1.3]{Kulikov2022}, it is easy to see
	$$||f||_{A_{\beta'}^q(\mathbb{D})} \leqslant ||f||_{A_\alpha^p(\mathbb{D})}.$$
	Since the $A_\beta^{q}(\mathbb{D})$ norm of $T_rf$ is decreasing as $r$ is decreasing, it suffices to prove that there holds
	$$||T_rf||_{A_\beta^{q}(\mathbb{D})} \leqslant ||f||_{A_{\beta'}^q(\mathbb{D})}$$
	for $r=\sqrt{\beta p/\alpha q}.$
	It is equivalent to
	$$(\beta-1)\int_\mathbb{D}|f(rz)|^q (1-|z|^2)^{\beta-2}dA(z)\leqslant (\beta'-1)\int_\mathbb{D}|f(z)|^q (1-|z|^2)^{\beta'-2}dA(z).$$
	By the polar coordinates, we need to prove
	\begin{align*}
		&(\beta-1)\int_0^1\rho(1-\rho^2)^{\beta-2}\left(\int_0^{2\pi}|f(r\rho e^{i \theta})|^qd\theta\right)d\rho\\
		\leqslant&(\beta'-1)\int_0^1\rho(1-\rho^2)^{\beta'-2}\left(\int_0^{2\pi}|f(\rho e^{i \theta})|^qd\theta\right)d\rho.
	\end{align*}
	Denote $\rho=\sqrt{y}$ and $$\Phi(y)=\int_0^{2\pi}|f(\sqrt{y} e^{i \theta})|^qd\theta,$$ it is sufficient to verify
	\[
		(\beta-1)\int_0^1(1-y)^{\beta-2}\Phi(r^2y)dy\leqslant(\beta'-1)\int_0^1(1-y)^{\beta'-2}\Phi(y)dy
	\]
	when $r=\sqrt{\beta p/\alpha q}=\sqrt{\beta/\beta'}.$
Since $q\geqslant 2$, by \cite[Lemma 1]{Petar2023}, $\Phi$ is $C^2$ smooth and $\Phi''(y)\geqslant 0.$ 
	Using integration by parts, we have
	\begin{align*}
		(\beta-1)\int_0^1(1-y)^{\beta-2}\Phi(r^2y)dy
		&=r^2\int_0^1(1-y)^{\beta-1}\Phi'(r^2y)dy\\
		&=\frac{r^2}{\beta}\Phi'(0)+\frac{r^4}{\beta}\int_0^1(1-y)^\beta\Phi''(r^2y)dy\\
		&=\frac{1}{\beta'}\Phi'(0)+\frac{r^2}{\beta'}\int_0^1(1-y )^\beta\Phi''(r^2y)dy\\
		&=\frac{1}{\beta'}\Phi'(0)+\frac{1}{\beta'}\int_0^{r^2}\left(1-\frac{y}{r^2}\right)^\beta\Phi''(y)dy,
	\end{align*}
	and 
	\begin{align*}
		(\beta'-1)\int_0^1(1-y)^{\beta'-2}\Phi(y)dy
		&=\int_0^1(1-y)^{\beta'-1}\Phi'(y)dy\\
		&=\frac{1}{\beta'}\Phi'(0)+\frac{1}{\beta'}\int_0^1(1-y )^{\beta'}\Phi''(y)dy.
	\end{align*}
	Then it suffices to prove
	\[
		\int_0^{r^2}\left(1-\frac{y}{r^2}\right)^\beta\Phi''(y)dy\leqslant\int_0^1(1-y )^{\beta'}\Phi''(y)dy.
	\]
Note that $\beta'/\beta=\alpha q/\beta p\geqslant 1$, the function $g(y)=(1-y)^{\beta'/\beta}$ is convex.
Hence, for all $0\leqslant y\leqslant\beta/\beta'=r^2$, we have
\[
	g(y)\geqslant g(0)+g'(0)y=1-\frac{y}{r^2},
\]
Note that $\Phi''(y)\geqslant 0$, we have
\begin{align*}
	\int_0^{r^2}\left(1-\frac{y}{r^2}\right)^\beta\Phi''(y)dy
	&\leqslant\int_0^{r^2}(1-y )^{\beta'}\Phi''(y)dy\\
	&\leqslant\int_0^1(1-y )^{\beta'}\Phi''(y)dy.
\end{align*}
This completes the whole proof.

\section{Proof of Theorem \ref{main02}}
Before proving Nikol'ski\u{\i}-type inequality for the weighted Bergman spaces, we need a hypercontractive inequality for high dimensions.
Let $n$ be a positive integer and $f\in A_\alpha^p(\mathbb{D}^n)$. Assume that $\textbf{z}=(z_1,\dots,z_n)\in\mathbb{D}^n$ and $\textbf{r}=(r_1,\dots,r_n)$ with $0\leqslant r_i<1$ for $i=1,\dots,n.$ 
Define an operator
\[
(\textbf{T}_\textbf{r} f)(\textbf{z})=f(r_1z_1,\dots,r_nz_n), \quad \textbf{z}\in\mathbb{D}^n.
\]
The following lemma is a corollary of Theorem \ref{main01}.

\begin{lemma}\label{Bayart}
	For $1<\alpha,\beta<\infty$ and $0<p\leqslant q<\infty$ , let $q\geqslant 2$, $\beta p\leqslant\alpha q$, and $\textbf{r}=(r_1,\dots,r_n)$ with $0\leqslant r_i<1$ for $i=1,\dots,n$. Then we have 
	\begin{equation}\label{high-dimensions}
		||\textbf{T}_\textbf{r} f||_{A_\beta^q(\mathbb{D}^n)} \leqslant ||f||_{A_\alpha^p(\mathbb{D}^n)}
	\end{equation}
	for any $f\in A_\alpha^p(\mathbb{D}^n)$ if and only if $ r_i^2\leqslant\beta p/\alpha q,$ for all $i=1,2,\dots,n.$
\end{lemma}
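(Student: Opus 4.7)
The plan is to prove Lemma \ref{Bayart} by induction on the dimension $n$, taking Theorem \ref{main01} as the base case $n=1$ and promoting the one-dimensional hypercontractive inequality to the polydisc by applying it one variable at a time and then rearranging the resulting iterated norms with Minkowski's integral inequality. The necessity direction is easy: the test function $f(\textbf{z}) = 1 + \varepsilon z_j$ depends only on the $j$-th coordinate, the remaining variables integrate to $1$ against the probability measures $dA_\beta$ and $dA_\alpha$, and (\ref{high-dimensions}) reduces to a one-dimensional inequality in $z_j$. Applying Lemma \ref{nec01} and the necessity part of Theorem \ref{main01} immediately yields $r_j^2 \leqslant \beta p/\alpha q$ for every $j = 1,\dots,n$.

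For sufficiency, assume $r_i^2 \leqslant \beta p/\alpha q$ for all $i$. For the inductive step from $n-1$ to $n$, first apply the one-variable Theorem \ref{main01} in the $z_n$-slot with the other coordinates fixed:
\[
\int_\mathbb{D} |f(r_1 z_1,\dots,r_n z_n)|^q\, dA_\beta(z_n) \leqslant \left(\int_\mathbb{D} |f(r_1 z_1,\dots,r_{n-1}z_{n-1}, z_n)|^p\, dA_\alpha(z_n)\right)^{q/p}.
\]
Integrating this over $z' = (z_1,\dots,z_{n-1}) \in \mathbb{D}^{n-1}$ against $dA_\beta(z_1)\cdots dA_\beta(z_{n-1})$ produces a mixed-norm bound in which the outer exponent is $q/p \geqslant 1$. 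Minkowski's integral inequality then permits moving that outer norm inside the $z_n$-integral, giving
\[
\|\textbf{T}_\textbf{r} f\|_{A^q_\beta(\mathbb{D}^n)}^p \leqslant \int_\mathbb{D} \left(\int_{\mathbb{D}^{n-1}} |f(r_1 z_1,\dots,r_{n-1}z_{n-1}, z_n)|^q\, dA_\beta(z_1)\cdots dA_\beta(z_{n-1})\right)^{p/q} dA_\alpha(z_n).
\]
For each fixed $z_n$, the inner $(n-1)$-fold integral is exactly the $q$-th power of the $A^q_\beta(\mathbb{D}^{n-1})$-norm of the dilation of $z' \mapsto f(z', z_n)$; by the induction hypothesis it is dominated by $\|f(\cdot, z_n)\|_{A^p_\alpha(\mathbb{D}^{n-1})}^q$. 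Raising to the $p/q$-power and integrating in $z_n$ against $dA_\alpha(z_n)$ collapses the iterated integral back to $\|f\|_{A^p_\alpha(\mathbb{D}^n)}^p$, closing the induction.

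The main point requiring care is the direction of Minkowski's integral inequality: with $q \geqslant p$, one swaps the outer $L^{q/p}(dA_\beta(z'))$ against the inner $L^1(dA_\alpha(z_n))$ so that the $A^q_\beta(\mathbb{D}^{n-1})$-norm ends up on the inside (matching the induction hypothesis) and $dA_\alpha(z_n)$ on the outside (matching the $A^p_\alpha$-norm on $\mathbb{D}^n$). Aligning exponents and measures correctly is the only bookkeeping subtlety; once that is in place, the lemma follows with no new analytic input beyond Theorem \ref{main01}.
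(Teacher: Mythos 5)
Your proposal is correct and follows essentially the same route as the paper: induction on the dimension with Theorem \ref{main01} applied in one variable, Minkowski's integral inequality with exponent $q/p\geqslant 1$ to swap the iterated norms, and the induction hypothesis on the remaining $(n-1)$ variables; the necessity likewise reduces to the one-dimensional statement via a test function depending on a single coordinate (the paper uses a general $g(z_i)$, you specialize to $1+\varepsilon z_j$, which amounts to the same thing). No substantive difference.
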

\begin{proof}
We prove the necessity first.
Assume that 
\[
	||\textbf{T}_\textbf{r} f||_{A_\beta^q(\mathbb{D}^n)} \leqslant ||f||_{A_\alpha^p(\mathbb{D}^n)}
\]
for any $f\in A_\alpha^p(\mathbb{D}^n)$.
For each $1\leqslant i\leqslant n$, let $g\in A_\alpha^p(\mathbb{D})$ and
\[
	f(\textbf{z})=f(z_1,\dots, z_i,\dots, z_n):=g(z_i),
\]
then $f\in A_\alpha^p(\mathbb{D}^n)$ and 
\[
	\|f\|_{A_\alpha^p(\mathbb{D}^n)}=\|g\|_{A_\alpha^p(\mathbb{D})}.
\] 
Moreover,
\[
	(\textbf{T}_\textbf{r} f)(\textbf{z})=(T_{r_i} g)(z_i).
\]
Then we have
\[
	||T_{r_i} g||_{A_\beta^q(\mathbb{D})} \leqslant ||g||_{A_\alpha^p(\mathbb{D})}.
\]
By Theorem \ref{main01}, we get $ r_i^2\leqslant\beta p/\alpha q$ for each $ i=1,2,\dots,n.$

We proceed by induction on $n$ to prove the sufficienty.
Suppose that $r_i^2\leqslant\beta p/\alpha q$ for all $ i=1,2,\dots,n$.
First, the inequality \eqref{high-dimensions} is true for $n=1$ by Theorem \ref{main01}. 
Assume that lemma is valid for $n=k-1$.
We will prove the inequality \eqref{high-dimensions} still holds when $n=k$.
Recall that 
\[
	||\textbf{T}_\textbf{r}f||_{A_\beta^q(\mathbb{D}^k)}
	=\left(\int_{\mathbb{D}^{k}}|f(r_1z_1,\dots,r_{k-1}z_{k-1},r_kz_k)|^q dA_\beta(z_1)\cdots dA_\beta(z_{k-1})dA_\beta(z_k)\right)^{\frac{1}{q}}.
\]
By Fubini Theorem and Theorem \ref{main01}, we have
\begin{align*}
	||\textbf{T}_\textbf{r}f||_{A_\beta^q(\mathbb{D}^k)}
	=&\left(\int_{\mathbb{D}^{k-1}}\left(\int_\mathbb{D}|f(r_1z_1,\dots,r_{k-1}z_{k-1},r_kz_k)|^q dA_\beta(z_k)\right)dA_\beta(z_1)\cdots dA_\beta(z_{k-1})\right)^{\frac{1}{q}}\\
	\leqslant&\left(\int_{\mathbb{D}^{k-1}}\left(\int_\mathbb{D}|f(r_1z_1,\dots,r_{k-1}z_{k-1},z_k)|^p dA_\beta(z_k)\right)^\frac{q}{p} dA_\beta(z_1)\cdots dA_\beta(z_{k-1})\right)^{\frac{1}{q}}.
\end{align*}
Since $0<p\leqslant q<\infty$, by Minkowski's inequality and the induction hypothesis, we have
\begin{align*}
	||\textbf{T}_\textbf{r}f||_{A_\beta^q(\mathbb{D}^k)}\leqslant&\left(\int_\mathbb{D}\left(\int_{\mathbb{D}^{k-1}}|f(r_1z_1,\dots,r_{k-1}z_{k-1},z_k)|^q dA_\beta(z_1)\cdots dA_\beta(z_{k-1})\right)^\frac{p}{q} dA_\beta(z_k)\right)^{\frac{1}{p}}\\
	\leqslant&\left(\int_\mathbb{D}\left(\int_{\mathbb{D}^{k-1}}|f(z_1,\dots,z_{k-1},z_k)|^p dA_\beta(z_1)\cdots dA_\beta(z_{k-1})\right) dA_\beta(z_k)\right)^{\frac{1}{p}}\\
	=&||f||_{A_\alpha^p(\mathbb{D}^k)}.
\end{align*}
This completes the proof of Lemma \ref{Bayart}.
\end{proof}

As a direct corollary of Lemma \ref{Bayart}, the inequality \eqref{Nikol} holds for $m$-homogeneous polynomials. For given $n\in\mathbb{N}$, we define a new norm for $Q(\textbf{z},w)\in\mathbb{C}[z_1,\dots,z_n,z_{n+1}].$ Denote
$$||Q||_{A^p_\alpha(\mathbb{D}^n,\textbf{z})\times H^p(\mathbb{T},w)}^p=\int_\mathbb{T}\int_{\mathbb{D}^n}|Q(\textbf{z},w)|^p dA_\alpha(\textbf{z})dw,$$ 
where $dA_\alpha(\textbf{z})=dA_\alpha(z_1)\cdots dA_\alpha(z_n)$
and $dw$ is the normalized arc length on the unit circle $\mathbb{T}.$ 
The following homogeneous technique of general complex polynomials is known. However, we cannot locate a suitable reference. For the sake of completeness, we include a short proof.

\begin{lemma}\label{Defant}
	For a given $n\in\mathbb{N}$, let $\textbf{z}=(z_1,\dots,z_n)\in\mathbb{D}^n,\ P(\textbf{z})\in\mathbb{C}[z_1,\dots,z_n]$ be a complex polynomial with degree $m$, then there exists an $m$-homogeneous polynomial $Q(\textbf{z},w)\in\mathbb{C}[z_1,\dots,z_n,z_{n+1}]$ such that
$$||P||_{A^p_\alpha(\mathbb{D}^n)}=||Q||_{A^p_\alpha(\mathbb{D}^n,\textbf{z})\times H^p(\mathbb{T},w)}.$$

\end{lemma}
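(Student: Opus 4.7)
The plan is to use the standard homogenization trick: embed the polynomial $P$ into an $m$-homogeneous polynomial in one extra variable $w$, and then recover the $A^p_\alpha(\mathbb{D}^n)$-norm of $P$ by integrating out $w$ using the rotation invariance of $dA_\alpha$.

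Writing $P(\textbf{z})=\sum_{|\gamma|\leqslant m} c_\gamma \textbf{z}^\gamma$, I would set
\[
Q(\textbf{z},w):=\sum_{|\gamma|\leqslant m} c_\gamma \textbf{z}^\gamma\, w^{m-|\gamma|},
\]
which is formally $w^m P(\textbf{z}/w)$. Since $\deg(P)=m$ forces $m-|\gamma|\geqslant 0$ for every contributing multi-index, $Q$ is a genuine polynomial in $(z_1,\dots,z_n,w)$, not a Laurent polynomial, and every monomial has total degree exactly $m$, so $Q\in\mathbb{C}[z_1,\dots,z_n,z_{n+1}]$ is $m$-homogeneous, as required.

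To establish the norm identity, I would unfold the definition by Fubini and use $|w|=1$ on $\mathbb{T}$ to obtain
\[
\|Q\|^p_{A^p_\alpha(\mathbb{D}^n,\textbf{z})\times H^p(\mathbb{T},w)}=\int_{\mathbb{T}}\int_{\mathbb{D}^n}|P(\textbf{z}/w)|^p\,dA_\alpha(\textbf{z})\,dw.
\]
For each fixed $w\in\mathbb{T}$, performing the diagonal rotation $z_i\mapsto wz_i$ in every coordinate leaves the product measure $dA_\alpha(\textbf{z})$ unchanged, because each factor $(\alpha-1)(1-|z_i|^2)^{\alpha-2}dA(z_i)$ is radial on $\mathbb{D}$. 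Consequently the inner integral equals $\|P\|^p_{A^p_\alpha(\mathbb{D}^n)}$ for every $w$, and since $\mathbb{T}$ carries normalized arc length, the outer integration trivially yields the claimed identity.

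There is no real obstacle in this argument; the lemma is essentially a bookkeeping verification. The two points worth double-checking are (i) the non-negativity of the exponents $m-|\gamma|$, which ensures $Q$ is a polynomial, and (ii) the invariance of the weighted area measure under the diagonal rotation on $\mathbb{D}^n$, which is immediate from its radial density. The same trick, incidentally, works with $A^p_\alpha$ replaced by $H^p$ on the polydisc, which is how Defant and Masty\l{}o apply it in \cite{Defant2016}.
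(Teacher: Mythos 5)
Your proposal is correct and follows essentially the same route as the paper: the same homogenization $Q(\textbf{z},w)=\sum_\gamma c_\gamma\textbf{z}^\gamma w^{m-|\gamma|}$, the same use of $|w|=1$ to reduce to $|P(\textbf{z}/w)|$, and the same rotation-invariance of the radial weight $dA_\alpha$ to evaluate the inner integral independently of $w$. No issues.
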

\begin{proof}
For a polynomial $P$ in $\mathbb{C}[z_1,\dots,z_n]$,
write
\[
	P(\textbf{z})=\sum\limits_{\gamma\in\mathbb{N}^n}c_\gamma \textbf{z}^\gamma,
\]
where $\gamma=(\gamma_1,\dots,\gamma_n)\in\mathbb{N}^n$ is a multi-index. 
Let $\text{deg}\ P=m$ and
\[
	Q(\textbf{z},w)=\sum\limits_{\gamma\in\mathbb{N}^n}c_\gamma \textbf{z}^\gamma w^{m-|\gamma|}\in\mathbb{C}[z_1,\dots,z_n,z_{n+1}],
\]
where $|\gamma|=\gamma_1+\dots+\gamma_n.$ Then $Q(\textbf{z},w)$ is $m$-homogeneous.
For $0<p<\infty,$ recall that
\begin{align*}
	||Q||_{A^p_\alpha(\mathbb{D}^n,\textbf{z})\times H^p(\mathbb{T},w)}^p&=\int_\mathbb{T}\int_{\mathbb{D}^n}|Q(\textbf{z},w)|^p dA_\alpha(\textbf{z})dw\\
	&=\int_\mathbb{T}\int_{\mathbb{D}^n}\left|\sum\limits_{\gamma\in\mathbb{N}^n}c_\gamma \textbf{z}^\gamma w^{m-|\gamma|}\right|^p dA_\alpha(\textbf{z})dw.
\end{align*}
Since $w\in\T$, let $\xi=(z_1 w^{-1},\dots,z_n w^{-1}),$ we have
\begin{align*}
	||Q||_{A^p_\alpha(\mathbb{D}^n,\textbf{z})\times H^p(\mathbb{T},w)}^p
	&=\int_\mathbb{T}\int_{\mathbb{D}^n}\left|\sum\limits_{\gamma\in\mathbb{N}^n}c_\gamma (z_1w^{-1})^{\gamma_1}\dots(z_nw^{-1})^{\gamma_n}\right|^p dA_\alpha(\textbf{z})dw\\
	&=\int_\mathbb{T}\int_{\mathbb{D}^n}\left|\sum\limits_{\gamma\in\mathbb{N}^n}c_\gamma \xi_1^{\gamma_1}\dots \xi_n^{\gamma_n}\right|^p dA_\alpha(\xi)dw\\
	&=||P||_{A_\alpha^p(\mathbb{D}^n)}^p.
\end{align*}
This completes the proof.
\end{proof}

Now we are ready to prove Theorem \ref{main02}.
Assume that
\[
	P(\textbf{z})=\sum\limits_{\gamma\in\mathbb{N}^n}c_\gamma \textbf{z}^\gamma\in\mathbb{C}[z_1,\dots,z_n]
\]
is a polynomial.
We divide the problem into two cases.
For the case $1<\alpha<\beta<\infty.$ 
Let
\[
	q'=\frac{\alpha}{\beta} q,
\]
then $0<p\leqslant q'<q<\infty$, $q'/\alpha=q/\beta$.
By Kulikov's inequality \cite[Corollary 1.3]{Kulikov2022}, one gets
\[
	||P||_{A_\beta^q(\mathbb{D}^n)}\leqslant||P||_{A_\alpha^{q'}(\mathbb{D}^n)}.
\]
Then it suffices to prove 
\[
	||P||_{A_\alpha^{q'}(\mathbb{D}^n)}\leqslant \left(\sqrt{\frac{\alpha q}{\beta p}}\right)^m ||P||_{A_\alpha^p(\mathbb{D}^n)}=\left(\sqrt{\frac{q'}{p}}\right)^m ||P||_{A_\alpha^p(\mathbb{D}^n)}
\]
Let $r_0=\sqrt{p/q'}<1$, $\textbf{r}_0=(r_0,\dots,r_0)$ and 
\[
	Q(\textbf{z},w)=\sum\limits_{\gamma\in\mathbb{N}^n}c_\gamma \textbf{z}^\gamma w^{m-|\gamma|}\in\mathbb{C}[z_1,\dots,z_{n+1}]
\]
be a $m$-homogeneous polynomial. 
Since for each $w\in\T$, $Q(\textbf{z},w)\in A_\alpha^p(\mathbb{D}^n)$. 
By Lemma \ref{Bayart} and Minkowski's inequality, we have
\begin{align*}
	||\textbf{T}_{\textbf{r}_0}Q||_{A^{q'}_\alpha(\mathbb{D}^n,\textbf{z})\times H^{q'}(\mathbb{T},w)}
	&=\left(\int_\mathbb{T}\int_{\mathbb{D}^n}|Q(\textbf{r}_0\textbf{z},r_0w)|^{q'} dA_\beta(\textbf{z})dw\right)^\frac{1}{q'}\\
	&\leqslant\left(\int_\mathbb{T}\left(\int_{\mathbb{D}^n}|Q(\textbf{z},r_0w)|^{p} dA_\beta(\textbf{z})\right)^{\frac{q'}{p}}dw\right)^\frac{1}{q'}\\
	&\leqslant\left(\int_{\mathbb{D}^n}\left(\int_\mathbb{T}|Q(\textbf{z},r_0w)|^{q'} dw\right)^{\frac{p}{q'}}dA_\beta(\textbf{z})\right)^\frac{1}{p}
\end{align*}
Since for each $\textbf{z}\in\D$, $Q(\textbf{z},w)\in H^p(\T)$.
By Weissler's work \cite{Weissler1980}, one gets
\begin{align*}
	||\textbf{T}_{\textbf{r}_0}Q||_{A^{q'}_\alpha(\mathbb{D}^n,\textbf{z})\times H^{q'}(\mathbb{T},w)}
	&\leqslant\left(\int_\mathbb{T}\int_{\mathbb{D}^n}|Q(\textbf{z},w)|^{p} dA_\beta(\textbf{z})dw\right)^\frac{1}{p}\\
	&=||Q||_{A^{p}_\alpha(\mathbb{D}^n,\textbf{z})\times H^p(\mathbb{T},w)}.
\end{align*}
Note that $Q$ is $m$-homogeneous, by Lemma \ref{Defant}, we complete the proof of the case $1<\alpha<\beta<\infty.$

For the case $1<\beta<\alpha<\infty,$ let
\[
	r_0=\sqrt{\frac{\beta p}{\alpha q}}<\sqrt{\frac{p}{q}}<1.
\] 
With similar arguments in the last case, we have
\begin{align*}
	||\textbf{T}_{\textbf{r}_0}Q||_{A^{q}_\alpha(\mathbb{D}^n,\textbf{z})\times H^{q}(\mathbb{T},w)}
	&=\left(\int_\mathbb{T}\int_{\mathbb{D}^n}|Q(\textbf{r}_0\textbf{z},r_0w)|^{q} dA_\beta(\textbf{z})dw\right)^\frac{1}{q}\\
	&\leqslant\left(\int_\mathbb{T}\left(\int_{\mathbb{D}^n}|Q(\textbf{z},r_0w)|^{p} dA_\beta(\textbf{z})\right)^{\frac{q}{p}}dw\right)^\frac{1}{q}\\
	&\leqslant\left(\int_{\mathbb{D}^n}\left(\int_\mathbb{T}|Q(\textbf{z},r_0w)|^{q} dw\right)^{\frac{p}{q}}dA_\beta(\textbf{z})\right)^\frac{1}{p}\\
	&\leqslant\left(\int_\mathbb{T}\int_{\mathbb{D}^n}|Q(\textbf{z},w)|^{p} dA_\beta(\textbf{z})dw\right)^\frac{1}{p}\\
	&=||Q||_{A^{p}_\alpha(\mathbb{D}^n,\textbf{z})\times H^p(\mathbb{T},w)}.
\end{align*}
This completes the proof of the case $1<\beta<\alpha<\infty$.

Now we show that the constant 
\[
	C(\alpha,\beta,p,q)=\sqrt{\frac{\alpha q}{\beta p}}
\]
is sharp. 
Let $n$ be a positive integer.
If one endows the polydisk $\mathbb{D}^n$ with Borel $\sigma$-field $\mathcal{B}(\mathbb{D}^n)$, then we get a probability space  $(\mathbb{D}^n, \mathcal{B}(\mathbb{D}^n), dA_\alpha(z_1)\dots dA_\alpha(z_n))$. Define 
$$z_i:\mathbb{D}^n\rightarrow \mathbb{C},\quad \textbf{z}\mapsto z_i, \quad 1\leqslant i\leqslant n,$$ 
then $\{z_i| 1\leqslant i\leqslant n\}$ are independent identically distributed random variables. Consider the $m$-homogeneous polynomial
\[
	p_{\alpha,n}(\textbf{z})=\sqrt{\frac{\alpha}{n}}\sum\limits_{i=1}^n z_i
\]
for given $n\in\mathbb{N}$ and $\alpha\geqslant 1.$  By the central limit theorem \cite{Legall}, the sequence of polynomials $p_{\alpha,n}$ conversges in distributions to the normal complex Gaussian random variable $G$ in distribution as $n$ approaches $\infty.$ Then for an fixed integer $m$
$$
	\lim\limits_{n\rightarrow\infty}\left\|p_{\alpha,n}^m\right\|_{A_\alpha^p(\mathbb{D}^n)}=\Gamma\left(\frac{mp}{2}+1\right).
$$
Note that $p_{1,n}=p_{\alpha,n}/\sqrt{\alpha}$.
Therefore,
$$\lim\limits_{n\rightarrow\infty}\left\|p_{1,n}^m\right\|_{A_\alpha^p(\mathbb{D}^n)}=\left(\frac{1}{\sqrt{\alpha}}\right)^m\Gamma\left(\frac{mp}{2}+1\right)^\frac{1}{p}.$$
Similarly, 
$$\lim\limits_{n\rightarrow\infty}\left\|p_{1,n}^m\right\|_{A_\beta^q(\mathbb{D}^n)}=\left(\frac{1}{\sqrt{\beta}}\right)^m\Gamma\left(\frac{mq}{2}+1\right)^\frac{1}{q}.$$
Hence,
\[
	\lim\limits_{n\rightarrow\infty}\frac{||p_{1,n}^m||_{A_\beta^q(\mathbb{D}^n)}}{||p_{1,n}^m||_{A_\alpha^p(\mathbb{D}^n)}}=\left(\sqrt{\frac{\alpha}{\beta}}\right)^m\frac{\Gamma\left(\frac{qm}{2}+1\right)^\frac{1}{q}}{\Gamma\left(\frac{pm}{2}+1\right)^\frac{1}{p}}.
\]
By the Stirling's formula
$$\sqrt{2\pi}x^{x+\frac{1}{2}}e^{-x}<\Gamma(x+1)<\sqrt{2\pi}x^{x+\frac{1}{2}}e^{-x+\frac{1}{12x}},$$ 
we have
\[
\frac{(2\pi)^{\frac{1}{2qm}}(\frac{qm}{2})^{\frac{1}{2}+\frac{1}{2qm}}e^{-\frac{1}{2}}}{(2\pi)^{\frac{1}{2pm}}(\frac{pm}{2})^{\frac{1}{2}+\frac{1}{2pm}}e^{-\frac{1}{2}+\frac{1}{6pm}}}
<\frac{\Gamma\left(\frac{qm}{2}+1\right)^\frac{1}{qm}}{\Gamma\left(\frac{pm}{2}+1\right)^\frac{1}{pm}}
<\frac{(2\pi)^{\frac{1}{2pm}}(\frac{pm}{2})^{\frac{1}{2}+\frac{1}{2pm}}e^{-\frac{1}{2}}}{(2\pi)^{\frac{1}{2qm}}(\frac{qm}{2})^{\frac{1}{2}+\frac{1}{2qm}}e^{-\frac{1}{2}+\frac{1}{6qm}}}
\]
Note that 
\[
\frac{(2\pi)^{\frac{1}{2qm}}(\frac{qm}{2})^{\frac{1}{2}+\frac{1}{2qm}}e^{-\frac{1}{2}}}{(2\pi)^{\frac{1}{2pm}}(\frac{pm}{2})^{\frac{1}{2}+\frac{1}{2pm}}e^{-\frac{1}{2}+\frac{1}{6pm}}}=e^\frac{1}{6pm}\frac{(qm\pi)^{\frac{1}{2qm}}}{(pm\pi)^{\frac{1}{2pm}}}\sqrt{\frac{q}{p}},
\]
and 
\[
\lim\limits_{m\rightarrow\infty}(qm)^\frac{1}{qm}=\lim\limits_{m\rightarrow\infty}e^{\frac{1}{qm}\ln qm}=1.
\]
It follows that 
\[
\lim\limits_{m\rightarrow\infty}\frac{\Gamma\left(\frac{qm}{2}+1\right)^\frac{1}{qm}}{\Gamma\left(\frac{pm}{2}+1\right)^\frac{1}{pm}}=\sqrt{\frac{q}{p}}.
\]
Consequently, 
\[
	\lim\limits_{n\rightarrow\infty}\frac{||p_{1,n}^m||_{A_\beta^q(\mathbb{D}^n)}}{||p_{1,n}^m||_{A_\alpha^p(\mathbb{D}^n)}}=\left(\sqrt{\frac{\alpha q}{\beta p}}\right)^m.
\]
Therefore, the constant 
\[
	C(\alpha,\beta,p,q)=\sqrt{\frac{\alpha q}{\beta p}}
\]
is sharp. 
Then the whole proof is complete now.

\end{document}